\newcommand{\pubnote}[1]{}  
\newcommand{\hide}[1]{}   
\newcommand{\F}[1]{Figure~\ref{F:#1}}         
\newcommand{\bbb}{\mathbb}      
\newcommand{\CP}{\text{\tt CirclePack}} 
\newcommand{\etl}[1]{#1}        
\newcommand{\ctl}[1]{\mathpzc{#1}}        
\newcommand{\ktl}[1]{\mathtt{#1}}        
\newcommand{\ptl}[1]{\mathtt{#1}}        
\newcommand{\atl}[1]{\mathpzc{#1}}        
\newcommand{\wmD}{\widetilde{D}}  %
\newcommand{\wmC}{\widetilde{\mathcal{C}}}  %
\newcommand{\cis}{\xrightarrow{shape}} 
\newcommand{\bs}[1]{^{\langle#1\rangle}}  
\newcommand{\bup}[2]{{#1}^{#2}}     
\newcommand{\cD}{D}
\newcommand{\bC}{\bbb C}         
\newcommand{\bD}{\bbb D}         
\newcommand{\bG}{\bbb G}         
\newcommand{\mC}{\mathcal C}
\DeclareFontFamily{OT1}{pzc}{}
\DeclareFontShape{OT1}{pzc}{m}{it}{<-> s * [1.10] pzcmi7t}{}
\DeclareMathAlphabet{\mathpzc}{OT1}{pzc}{m}{it}
\theoremstyle{plain}
\newtheorem{Thm}{Theorem}[section]
\newtheorem{Lem}[Thm]{Lemma}
\newtheorem{Claim}[Thm]{Claim}
\newtheorem{Question}{Question}
\newtheorem*{definition}{Definition}
\theoremstyle{definition}
\begin{document}


\title{Shape Convergence for Aggregate Tiles\\in Conformal Tilings}
\author{R. Kenyon}
	\address{Brown University, Providence, RI}
	\email{rkenyon at math.brown.edu}
\author{K. Stephenson}
     \address{University of Tennessee, Knoxville} 
     \email{kens at math.utk.edu}



\begin{abstract}
Given a substitution tiling $\etl{T}$ of the plane with
subdivision operator $\tau$, we study the conformal tilings $\ctl{T}_n$
associated with $\tau^n\etl{T}$. We prove 
that aggregate tiles within $\ctl{T}_n$
converge in shape as $n\rightarrow \infty$ to their associated
Euclidean tiles in $\etl{T}$.
\end{abstract}

\maketitle

\section{Introduction}

The term {\sl tiling} refers to a locally finite decomposition of a
topological plane into a pattern of compact regions known as its {\sl
  tiles}. This paper involves tilings of four successive types: Starting from a {\sl substitution} tiling, one can construct a {\sl
  combinatorial} tiling, then an {\sl affine} tiling, and finally a
{\sl conformal} tiling. Connections between the first and the last
are the subject of this paper, with the middle two as necessary
bridges. Here are the rough definitions of these objects.

\vspace{10pt}
{\narrower{\setlength{\parindent}{10pt}
           
\noindent $\bullet$ {\sl Substitution tilings $\etl{T}$:} This is a
well-studied class of Euclidean tilings of the complex plane. Each
tile in a substitution tiling $\etl{T}$ is similar to one of a finite
set of polygonal prototiles. Moreover, there is an associated
subdivision rule $\tau$ specifying how each tile can be decomposed
into a finite union of subtiles, each again similar to one of the
prototiles. The tilings themselves are limits of successive
subdivisions followed by rescaling.

\vspace{6pt} 
\noindent $\bullet$ {\sl Combinatorial tilings $\ktl{K}$:} These are
abstract cell decompositions of a topological plane obtained by
removing the metric properties from substitution tilings $\etl{T}$.
The geometric subdivision rule $\tau$ for $\etl{T}$ becomes a
combinatorial subdivision rule which can be applied to $\ktl{K}$.

\vspace{6pt}
\noindent $\bullet$ {\sl Affine tilings $\atl{A}$:} These are obtained
from combinatorial tilings $\ktl{K}$ by identifying each $n$-sided
cell of $\ktl{K}$ with a unit-sided regular Euclidean $n$-gon.  An
affine tiling $\atl{A}$ is not realized metrically in $\bC$, but
defines rather a plane with a piecewise Euclidean metric structure.

\vspace{6pt} 
\noindent $\bullet$ {\sl Conformal tilings $\ctl{T}$:} These arise
from affine tilings by imposing a canonical conformal structure in
which each tile is conformally regular and enjoys a certain
anticonformal {\sl reflective property} across its edges.  The
resulting Riemann surface is conformally equivalent to
$\bC$, and its image under a conformal homeomorphism is what
we refer to as a {\sl conformal tiling} $\ctl{T}$. 

}}
\vspace{10pt}

We use distinct symbols to distinguish these four categories
and the symbol ``$\sim$'' to denote corresponding objects. A substitution
tiling $\etl{T}$ leads to a combinatorial tiling $\ktl{K}$, then to an
affine tiling $\atl{A}$, and finally to a conformal tiling $\ctl{T}$.
Thus $\etl{T}\sim\ktl{K}\sim\atl{A}\sim\ctl{T}$. A tile
$\etl{t}\in\etl{T}$ is a Euclidean polygon, and corresponds to a tile
$\ktl{k}\in\ktl{K}$, a combinatorial $n$-gon, which in turn
corresponds to a tile $\atl{a}\in\atl{A}$, a regular Euclidean
$n$-gon, and this finally corresponds to a conformal tile
$\ctl{t}\in\ctl{T}$, which is a {\sl conformal polygon}, that is, a
topological polygon in the plane with analytic arcs as sides. Thus
$\etl{t}\sim\ktl{k}\sim\atl{a}\sim\ctl{t}$. We will shortly review the
definitions and properties of tilings, and in particular of conformal
tilings as developed in \cite{BS17}.

A substitution tiling $\etl{T}$ comes with a subdivision operator
$\tau$, and applying $\tau$ leads to a new substitution tiling
$\tau\etl{T}$ with the same set of prototiles. That subdivision operation is
also inherited by the associated combinatorial tiling $\ktl{K}\sim\etl{T}$,
and we have $\tau\ktl{K}\sim\tau\etl{T}$.
In both these settings, $\tau$ is considered an {\bf in situ} operator, that is,
it subdivides tiles in place within $\etl{T}$ or $\ktl{K}$. Figure \ref{F:Pinwheel} 
illustrates a fragment of the pinwheel tiling and its subdivision rule.
\vspace{10pt}
\begin{center}
\begin{overpic}[width=.75\textwidth
]{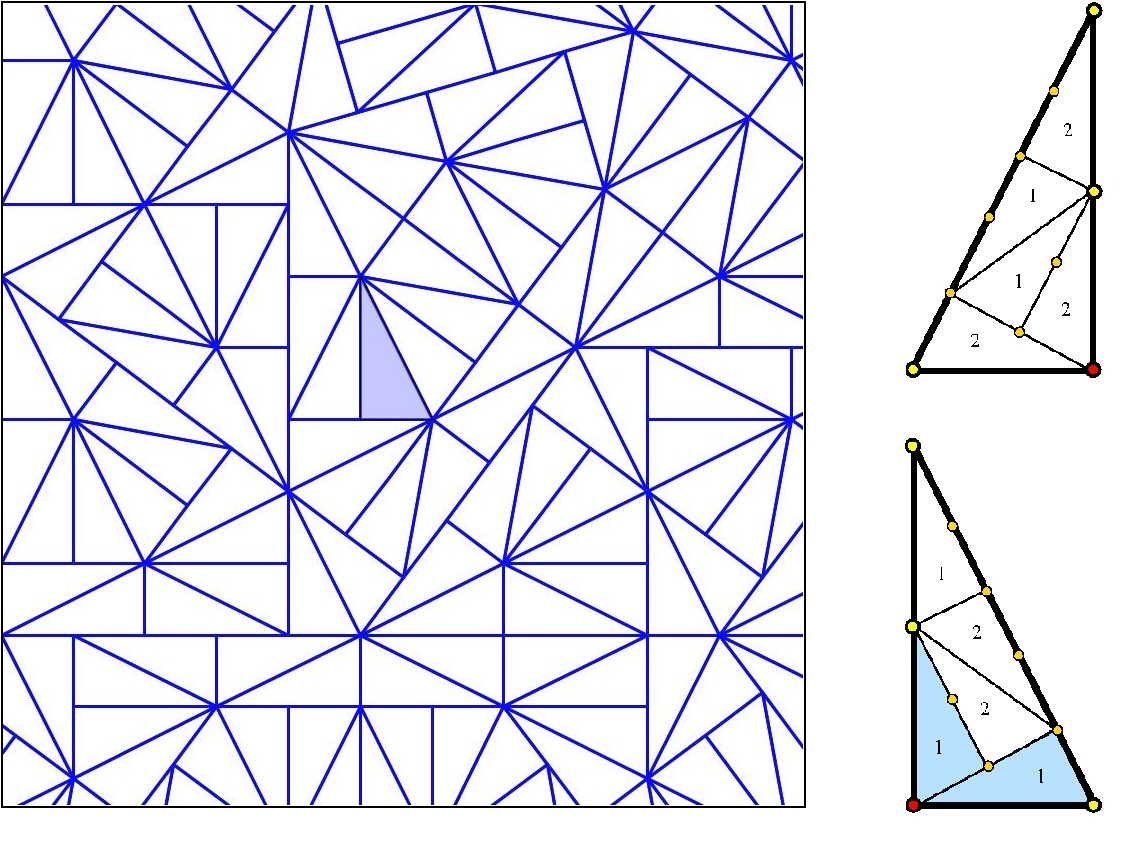}
\put (82,63) {$\ptl{p}_1$}
\put (92,25) {$\ptl{p}_2$}
\end{overpic}
\captionof{figure}{A fragment of a ``pinwheel'' substitution tiling
  of the plane, its two prototiles, and the subdivision rule $\tau$.}
\label{F:Pinwheel}
\end{center}

\vspace{10pt} Things are different in the conformal setting. True,
there is a conformal tiling $\ctl{T}\sim\ktl{K}$, and a conformal
tiling $\ctl{T}_1\sim \tau\ktl{K}$. It is not appropriate, however, to
write $\ctl{T}_1=\tau\ctl{T}$, since $\tau$ does not generally act
{\sl in situ} in the conformal case. A conformal tile $\ctl{t}$ is
generally different in shape from its Euclidean counterpart
$\etl{t}\sim \ctl{t}$, and the unions of conformal tiles coming from
successive subdivisions of $\ctl{t}$ will have a succession of yet
other shapes. However, experiments in \cite{BS17} (see \S3.6) suggested
that aggregate tiles --- the union of tiles associated with successive
subdivisions of a given tile --- look increasing like that tile's
Euclidean counterpart. In this paper we prove that this is indeed
the case. Note in particular that the purely combinatorial tiling
$\ktl{K}\sim\etl{T}$ and the combinatorial subdivision rule $\tau$
somehow encode all the geometric information in $\etl{T}$ itself.
This and other comments about our result will be discussed in
Section~\ref{S:Examples}.

\section{Example: the Pinwheel Tiling}
An early example may be helpful. The pinwheel tiling $\etl{T}$ was
introduced by John H. Conway (see Charles Radin \cite{cR94}).  The tiles are
all [1:2:$\sqrt{5}$] triangles. Due to orientation, there are two
prototiles, $\{\ptl{p}_1,\ptl{p}_2\}$, pictured along with the
associated subdivision rule $\tau$ on the right in \F{Pinwheel}. Note
that $\tau$ breaks each triangle into 5 similar triangles, their types
indicated by ``1'' and ``2''. The two shaded subtiles in $\ptl{p}_2$
will be discussed shortly. \F{PinPairs} focuses on root tile $\etl{t}$,
the shaded tile in \F{Pinwheel}.

\begin{center}
\begin{overpic}[width=.8\textwidth
]{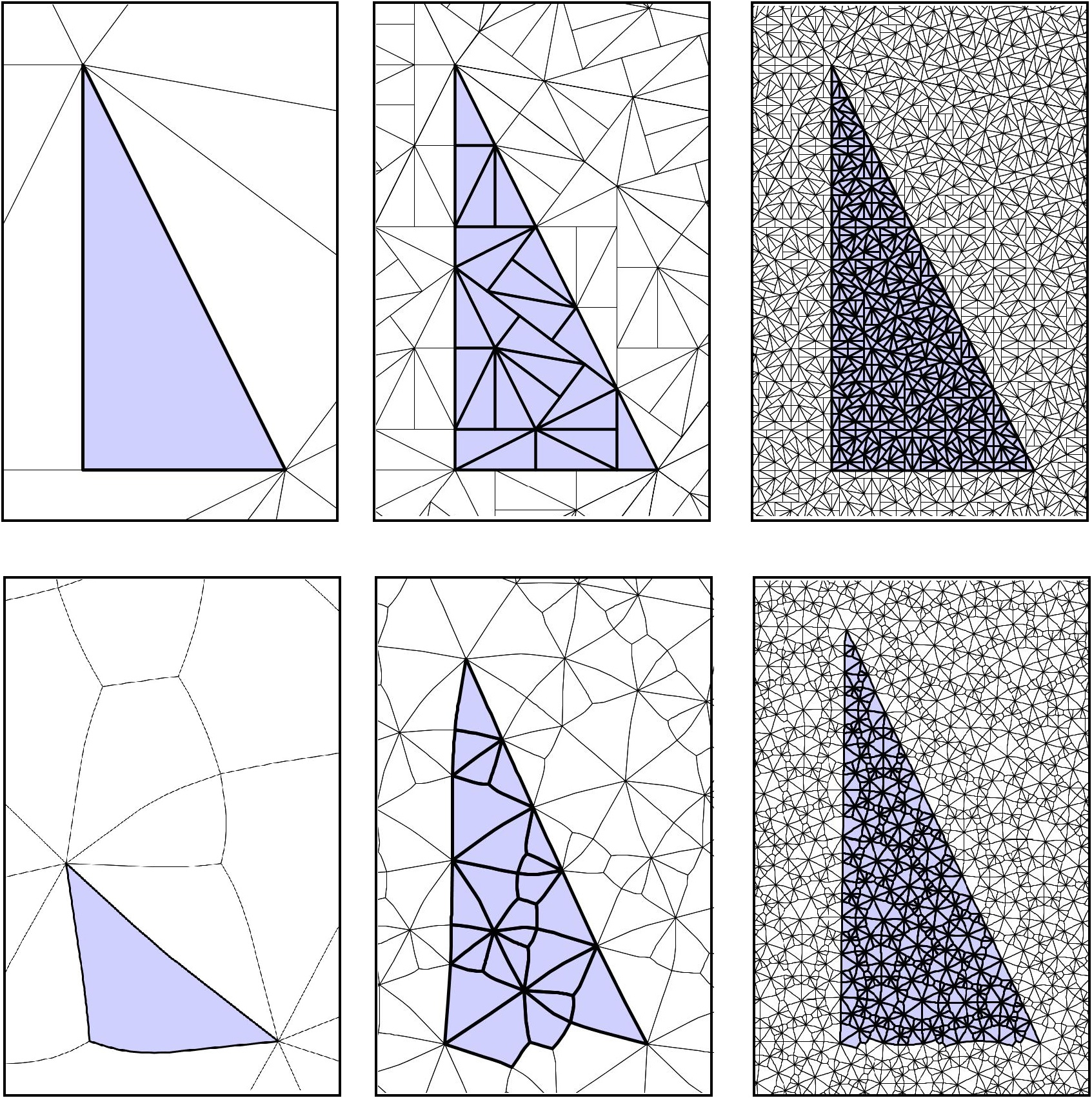}
\put (6,54.3) {$a$}
\put (26,54.3) {$b$}
\put (6,2.5) {$a$}
\put (26,2.5) {$b$}
\end{overpic}
\captionof{figure}{Across the top: Euclidean tile $\etl{t}$ and
  aggregates $\etl{t}\bs{2}$, $\etl{t}\bs{4}$. Across the bottom: conformal
  versions $\ctl{t}$, $\ctl{t}\bs{2}$, $\ctl{t}\bs{4}$.} \label{F:PinPairs}
  \hide{op are a Euclidean pinwheel tile
  $\etl{t}$, its 2-aggregate tile $\etl{t}\bs{2}$ after 2 subdivsions,
  and its 4-aggregate tile $\etl{t}\bs4$ after 4 subdivisions.  Along
  the bottom are the associated conformal objects, $\ctl{t}$,
  $\ctl{t}\bs2$, and $\ctl{t}\bs4$.}
\end{center}

The top row of \F{PinPairs} shows $\etl{T}$, the twice subdivided
tiling $\tau^2\etl{T}$, and the four times subdivided tiling
$\tau^4\etl{T}$.  The subdivision operation occurs {\it in situ} in
the Euclidean setting, so $\etl{t}$ is a union of its 25 subtiles in
$\tau^2\etl{T}$ and a union of its $625$ subtiles in $\tau^4\etl{T}$.

The bottom row of \F{PinPairs} shows the corresponding fragments of
the associated conformal tilings, denoted $\ctl{T}, \ctl{T}_2,$ and
$\ctl{T}_4$, respectively.  Each conformal version is normalized so
the base corners are at 0 and 1, as in the top row. The tiles
associated with $\etl{t}$ are again highlighted in blue.  Note that
subdivsion in the conformal setting does not happen {\it in situ}:
each subdivision of $\etl{T}$ engenders a new conformal structure. The
25 blue tiles in $\ctl{T}_2$ form what we call a $2$-aggregate tile
$\ctl{t}\bs2$, while the 625 blue tiles from $\ctl{T}_4$ form a
$4$-aggregate tile $\ctl{t}\bs4$.

The point of our paper is hinted at in the two tilings in the right
column of \F{PinPairs}: although the subdivision action is not {\it in
  situ} in the bottom, the conformal $n$-aggregate tiles ---
aggregated after $n$ stages of subdivision --- seem to be converging
in shape to the original Euclidean tile $\etl{t}$. After some notation
and definitions, we state a theorem which confirms this convergence.

\section{Tiling Details}
The substitution tilings $\etl{T}$ of interest here are {\sl
  aperiodic}, {\sl hierarchical} tilings of $\bC$ displaying {\sl
  finite local complexity}.  The most well known examples are the
Penrose tilings, which will appear along with ``chair'', ``domino'',
and ``sphinx'' examples in \S{\ref{S:Examples}}.  The reader may refer
to \cite{BS17} for further background. Briefly, each substitution tiling
$\etl{T}$ has an associated finite set
$\{\ptl{p}_1,\ptl{p}_2,\cdots,\ptl{p}_q\}$ of Euclidean polygonal
{\sl prototiles}, with every tile $\etl{t}\in\etl{T}$ being similar to one
of the prototiles. We note that prototiles are distinguished by shape,
orientation, a designated base edge $\langle a,b\rangle$, and possibly
by some abstract ``label''. By a ``similarity'' of tiles, we mean a
geometric similarity as polygons that also respects these features.
Every prototile has an associated decomposition into subtiles, each
subtile again being similar to one of the prototiles. These
decompositions define a {\sl subdivision} operator $\tau$ which, when
applied to $\etl{T}$ yields a new substitution tiling $\tau\etl{T}$
with the same prototiles. In the traditional theory, subdivision may
be accompanied by an associated renormalization that is, (rescaling);
that will not be the case here, so $\tau$ operates as an {\it in situ}
subdivision operator. 

As noted earlier, if we ignore the metric properties of $\etl{T}$, we
are left with a cell decomposition of a topological plane, which we
treat as a combinatorial tiling and denote by $\ktl{K}$. To avoid
ambiguity, we require a condition of $\ktl{K}$: any two of
its tiles are either disjoint or their intersection is a union of
vertices and/or full edges. Note in the pinwheel subdivision rule of
\F{Pinwheel}, for example, that although the tiles are Euclidean
triangles, the pattern of tile intersections requires four designated
corners (marked by dots), and pinwheel tiles are thus considered 
$4$-sided.
The faces of $\ktl{K}$, which are combinatorial polygons, retain their associations
with prototiles so that $\tau$ may, abusing notation, be treated as a
combinatorial subdivision operator: thus $\ktl{K}\sim\etl{T}$ implies
$\tau\ktl{K}\sim\tau\etl{T}$.

With $\ktl{K}$ in hand, we consider the associated affine tiling
$\atl{A}$. Every tile $\ktl{k}\in \ktl{K}$ may be identified with a
unit sided regular equilateral $n$-gon $p$, where $n$ is the number of
sides of $\ktl{k}$. If tiles $\ktl{k}_1,\ktl{k}_2$ share an edge, then
their polygons $p_1,p_2$ are identified isometrically along the
corresponding edge. This allows us to define a piecewise Euclidean
metric on $\ktl{K}$ with conical singularities. Thus we arrive at our
affine tiling $\atl{A}$, writing $\atl{A}\sim\ktl{K}$. Note that the
corners of tiles of $\atl{A}$ are generically non-flat cone points, so
we do not attempt to realize $\atl{A}$ in any concrete setting.

This brings us to the tiling of direct interest, the conformal tiling
$\ctl{T}$. There is a canonical conformal atlas on $\atl{A}$:
interiors of the regular polygons provide charts for their points,
interiors of unions of two adjacent regular polygons provide charts
for interior points of tile edges, and local power maps provide charts
for tile vertices. With this atlas, the tiling $\atl{A}$ becomes a
Riemann surface. Since $\atl{A}$ is simply connected and not compact,
the classical Uniformization Theorem in complex analysis implies
existence of a conformal homeomorphism $\phi:\atl{A}\rightarrow \bG$
where $\bG$ is either the unit disc $\bD$ or the complex plane $\bC$.

\begin{definition} Given an affine tiling $\atl{A}$, the images
  $\{\phi(a):a\in\atl{A}\}$ under a conformal homeomorphism
  $\phi:\atl{A}\rightarrow \bG$ form a {\bf conformal tiling} in
  $\bG$. We write $\ctl{T}$ for this tiling and note that it
  is uniquely determined up to M\"obius transformations of $\bG$.
\end{definition}

In a sense, $\atl{A}$ with its conformal structure is already a
conformal tiling. However, under the map $\phi$, the individual tiles
become concrete shapes in $\bG$, and it is these shapes which are of
interest in our work. The properties of $\ctl{T}$ and its tiles are
developed fully in \cite{BS17}. We need not be concerned with details, but
some features are noteworthy: Each conformal tile $\ctl{t}\in \ctl{T}$
is a {\sl conformal polygon}, a curvilinear polygon with sides which
are analytic arcs. Indeed, each is a {\sl conformally regular
  polygon}, meaning that there is a conformal self-map
$f:\ctl{t}\rightarrow \ctl{t}$ which maps each corner to the next; $f$
has a single fixed point, the {\sl conformal center} of
$\ctl{t}$. Conformal tiles $\ctl{t},\ctl{t}'$ sharing an edge have a
anti-conformal reflective relationship across that edge, which leads
to an important rigidity phenomenon in conformal tilings: the shape of
any single tile of $\ctl{T}$ determines uniquely the shapes and
locations of every other tile of $\ctl{T}$.

There remains the issue of whether $\ctl{T}$ lies in $\bD$ or
$\bC$. This is known as the ``type'' problem and might normally
require some work to resolve. For substitution tilings of the plane
with finite local complexity, however, the associate conformal tilings
are always {\sl parabolic}, that is, $\bG=\bC$. This will follow
from the quasiconformal arguments in Lemma~\ref{L:qc} below.

\section{Subdivision Details}\label{S:subdetails}
Our substitution tilings $\etl{T}$ are assumed to display {\sl finite
  local complexity}. This means simply that any two tiles can be
juxtaposed in at most finitely many ways, up to similarity. The same
then holds for any subdivision $\tau^n\etl{T}$.
We must place a side condition on $\etl{T}$, one that is nearly universal,
failing in only the most trivial of substitution tilings. Nevertheless,
we make it explicit for later use.

\vspace{10pt}
\noindent{\bf Standing Assumption on $\etl{T}$:} {\sl There must exist
  a configuration $\mC=\etl{p}\cup\etl{q}$, a union of two Euclidean tiles, 
so that the following holds:
\begin{enumerate}
\item{both $\etl{p}$ and $\etl{q}$ are congruent to the same
  prototile $\ptl{p}$;}
\item{if $S:\etl{p}\rightarrow \etl{q}$ is a congruence, then the linear
part of $S$ is not $\pm I$, plus or minus the identity.}
\item{for in every open disc $D=D(r,z)\subset \bC$ there exists an
  integer $n$ so that $\tau^n\etl{T}$ contains a pair of tiles,
  $\etl{t}_1,\etl{t}_2$ having the type of $\ptl{p}$ so that their
  union $\etl{t}_1\cup\etl{t}_2$ is similar to $\mC$.}
\end{enumerate}
}

\vspace{10pt} In the tile schematics of \F{Pinwheel}, the union of the
two shaded tiles is an example of such a configuration $\mC$: both
tiles are congruent to $\ptl{p}_1$ and are not translations of one
another. A similar configuration will occur in the subdivision of
every tile similar to $\ptl{p}_2$ and thus will occur densely
throughout the plane as $\etl{T}$ undergoes subdivision. In general,
of course, the tiles forming $\mC$ are not necessarily contiguous or
subtiles of the same parent.

\section{Statement of the Theorem}
A substitution tiling $\etl{T}$ is completely compatible with its
subdivision operator $\tau$ in that $\tau \etl{T}$ is an {\sl in situ}
decomposition of $\etl{T}$ into subtiles.  More generally, for every
positive integer $n$, $\tau^n \etl{T}$ is an {\sl in situ}
decomposition of $\tau^{n-1}\etl{T}$, and hence by induction, of
$\etl{T}$ itself. For convenience we write $\etl{T}_n$ for
$\tau^n\etl{T}$. And if $\ktl{K}\sim \etl{T}$, then we write
$\ktl{K}_n=\tau^n\ktl{K}$, noting that $\ktl{K}_n\sim\etl{T}_n$.

The reverse of subdivision is {\sl aggregation}. Fix $n\ge 0$ and
focus on a combinatorial tile $\ktl{k}\in \ktl{K}_n$. Now perform an
additional $m$ subdivisions of $\ktl{K}_n$ to get $\ktl{K}_{n+m}$.
Write $\ktl{k}\bs{m}$ for the union of tiles in $\ktl{K}_{n+m}$ which
were generated during the $m$ subdivisions of $\ktl{k}$: this union
$\ktl{k}\bs{m}$ will be called an {\sl $m$-aggregate tile} in
$\ktl{K}_{n+m}$ and the combinatorial connection to $\ktl{k}$ is
indicated by writing $\ktl{k}\bs{m}\sim \ktl{k}$. In other words,
an $m$-aggregate tile is a union of tiles at the $(n+m)$th subdivision
stage which form a single tile $\ktl{k}$ from the $n$th subdivision
stage.

This notion of aggregation (and the associated notations) apply
equally to substitution, combinatorial, affine, and conformal
tilings. However, the geometric differences are the subject of this
paper. Consider a substitution tiling $\etl{T}$, a tile $\etl{t}\in
\etl{T}_n$, the combinatorial tile $\ktl{k}\sim
\etl{t}$ in $\ktl{K}_n$, and its $m$-aggregate
$\ktl{k}\bs{m}$. By definition, $\ktl{k}\bs{m}=\ktl{k}$. Likewise,
because subdivision occurs {\sl in situ} for substitution tilings, the
$m$-aggregate tile $\etl{t}\bs{m}\sim \ktl{k}\bs{m}$ is equal as a point
set to the original tile $\etl{t}\in \etl{T}_n$.

This is not the case for the corresponding conformal tiles. Let
$\ctl{T}$, $\ctl{T}_n$, and $\ctl{T}_{n+m}$ be conformal tilings,
where $\ctl{T}\sim\ktl{K}$, $\ctl{T}_n\sim\ktl{K}_n$, and
$\ctl{T}_{n+m}\sim\ktl{K}_{n+m}$, $n,m\ge 0$.  Consider
$\ctl{t}\in\ctl{T}_n$, with $\ctl{t}\sim \ktl{k}$. In general, the
union of conformal tiles of $\ctl{T}_{n+m}$ corresponding to the
$m$-aggregate tile $\ktl{k}\bs{m}$ is {\sl not equal} as a point set
to the conformal tile $\ctl{t}\in \ctl{T}_n$. This can be seen in the
conformal images of the bottom row in \F{PinPairs}: the 2-aggregate
$\ctl{t}\bs{2}$ shown in the middle image, a union of 25 blue
conformal tiles, has a shape unequal to its parent conformal tile,
shown in the image to its left. Likewise, the 4-aggregate
$\ctl{t}\bs{4}$ shown on the right has yet another shape. Comparing
the right side images from the top and bottom rows, however, the
shapes appear to be getting close. Before our statement, we need to
formalize a notion of shape.

\begin{definition} Jordan domains $\Omega_j$ in the plane are said to
  {\bf converge in shape} to a Jordan domain $\Omega$ if there exist
  Euclidean similarities $\Lambda_j$ of the plane so that
  $\Lambda_j(\Omega_j)$ converges to $\Omega$ in the Hausdorff metric
  as $j\rightarrow \infty$. Write $\Omega_j\cis \Omega$.
\end{definition}

Conformal tilings can be approximated in practice using methods of
circle packing. Experiments carried out by the third author and Phil
Bowers in \cite{BS17} suggested that the phenomenon of
shape evolution seen with the pinwheel tiling is not unique. A peek
ahead to \F{AggShapes} reveals shape comparisons for some other well
known substitution tilings. The experiments that gave us these
images led to the shape question (\cite[page 38]{BS17}) which is answered
affirmatively here.

\begin{Thm} Let $\etl{T}$ be a substitution tiling of the plane with
  subdivision rule $\tau$ and let $\ctl{T}_n$ denote the conformal
  tilings associated with $\tau^n\etl{T}$. Fix a tile
  $\etl{t}\in\etl{T}$. For each $n\ge0$ let $\ctl{t}\bs{n}$ be the
  associated $n$-aggregate tile in $\ctl{T}_n$. Then
  $\ctl{t}\bs{n}\cis \etl{t}$ as $n\rightarrow \infty$.
\end{Thm}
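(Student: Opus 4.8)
The plan is to compare the conformal tiling $\ctl{T}_n$ with the Euclidean tiling $\etl{T}_n=\tau^n\etl{T}$ through quasiconformal maps that are uniformly controlled in $n$, and then to argue that the rapidly oscillating dilatation of these maps \emph{homogenizes to an isotropic, hence conformal, effective structure} on the macroscopic scale of the aggregate. First I would, for each $n$, factor through the affine model: the conformal uniformization $\phi_n:\atl{A}_n\to\bC$ composed with the piecewise-affine developing map sending each unit regular tile of $\atl{A}_n$ to its Euclidean counterpart in $\etl{T}_n$ produces a homeomorphism $h_n:\ctl{T}_n\to\etl{T}_n$ that is affine on each tile. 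Since $\tau$ acts \emph{in situ} without renormalization, all tiles of $\etl{T}_n$ sit at one common scale $\lambda^{-n}$, so that scale factors out and the per-tile dilatation depends only on the finitely many prototile shapes; by finite local complexity $h_n$ is therefore $K$-quasiconformal with $K$ independent of $n$. This is exactly the estimate behind Lemma~\ref{L:qc}, which simultaneously certifies $\bG=\bC$. Because $\etl{t}\bs{n}=\etl{t}$ in situ, restricting and inverting gives a $K$-quasiconformal $g_n:\etl{t}\to\ctl{t}\bs{n}$ carrying base corners $a,b$ to base corners.

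Next I would set up the homogenization on the \emph{fixed} domain $\etl{t}$. The Beltrami coefficient $\nu_n$ of $g_n$ has modulus bounded by some $k<1$ and is, on each tile of $\etl{T}_n$ meeting $\etl{t}$, a fixed infinitesimal ellipse whose orientation is that of the underlying tile. As $n\to\infty$ these tiles shrink to scale $\lambda^{-n}\to0$ inside the fixed region $\etl{t}$, so $\nu_n$ oscillates at vanishing scale with fixed amplitude but varying direction. The macroscopic limit of $g_n$ is governed by the homogenization of the Beltrami equation: the oscillating conformal structure is replaced by a constant effective structure $\nu_{\mathrm{eff}}$, and $g_n$ converges (after passing to a subsequence) uniformly to the affine map carrying that constant coefficient. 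By the uniform $K$-bound and the two-corner normalization the family $\{g_n\}$ is equicontinuous, so such subsequential limits exist and the aggregates $\ctl{t}\bs{n}=g_n(\etl{t})$ are precompact in the Hausdorff metric.

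The crux is to show $\nu_{\mathrm{eff}}=0$, and this is where the Standing Assumption is essential. The constant tensor $\nu_{\mathrm{eff}}$ inherits the symmetry of the statistical distribution of tile orientations across scales. The assumption forces the configuration $\mC$ — two tiles of the same type related by a similarity whose linear part is a rotation with linear part $\neq\pm I$, i.e.\ a rotation by an angle $\theta\notin\{0,\pi\}$ — to recur densely at arbitrarily fine scales throughout $\etl{t}$ under subdivision. Hence the orientation statistics are invariant under rotation by $\theta$, and a symmetric $2$-tensor invariant under a rotation by such an angle must be isotropic; therefore $\nu_{\mathrm{eff}}=0$ and the homogenized limit map is complex-affine, i.e.\ a Euclidean similarity $\Lambda$ fixing $a,b$. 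I expect this homogenization step to be the main obstacle: weak convergence of $\nu_n$ alone does \emph{not} control quasiconformal limits, so one must exploit the self-similar hierarchy of $\tau^n\etl{T}$ (which makes the fine-scale conformal statistics stationary) together with the reflective rigidity of conformal tilings (which rigidly ties the small-scale geometry to the combinatorics) to establish the requisite compactness and to identify the effective structure. The reduction to isotropy is precisely why condition~(2), linear part $\neq\pm I$, is imposed.

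Finally I would conclude: since the limit map is a similarity fixing the base corners and the $g_n$ are orientation-preserving, the homogenized limit is a similar copy of $\etl{t}$, so every subsequential Hausdorff limit of $\ctl{t}\bs{n}=g_n(\etl{t})$ is a similar copy of $\etl{t}$. As the subsequence was arbitrary and the limit shape is the same each time, $\ctl{t}\bs{n}\cis\etl{t}$ as $n\rightarrow\infty$.
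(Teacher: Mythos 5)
Your setup matches the paper's: uniformly $\kappa$-quasiconformal maps between $\etl{T}_n$ and $\ctl{T}_n$ built tile-by-tile through the affine model (this is exactly Lemma~\ref{L:qc}, including the parabolicity claim), normalization at the base corners, precompactness by normal families, and the reduction to showing that every subsequential limit is a similarity. The divergence --- and the gap --- is in how you identify the limit as conformal. You propose to view the Beltrami coefficient $\nu_n$ of $g_n$ as a rapidly oscillating field and invoke homogenization: a constant effective coefficient $\nu_{\mathrm{eff}}$ governs the macroscopic limit, and $\nu_{\mathrm{eff}}=0$ because ``the orientation statistics are invariant under rotation by $\theta$.'' Two steps here are unsupported, and the second looks unfixable as stated. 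First, the existence of a \emph{constant} effective structure does not follow merely from the oscillation scale tending to zero; for an aperiodic medium it requires a stationarity/ergodicity input (unique ergodicity of the tiling dynamical system, say) together with an actual homogenization theorem for the Beltrami equation, neither of which you supply --- and as you yourself note, weak convergence of $\nu_n$ controls nothing. Second, and more seriously, the effective coefficient under $G$-convergence of conformal structures is \emph{not} a function of the pointwise distribution of $\nu_n$; it is determined by correctors (cell problems) and hence by spatial correlations. Your symmetry argument would need the entire hierarchical structure of $\etl{T}$ to be statistically invariant under rotation by $\theta$, but the Standing Assumption only provides one two-tile configuration $\mC$, together with its rotated partner, recurring densely; it gives no equidistribution of orientations, and for many substitution tilings the orientation frequencies are genuinely asymmetric. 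So the chain ``$\mC$ recurs densely $\Rightarrow$ the effective tensor is $\theta$-invariant $\Rightarrow$ $\nu_{\mathrm{eff}}=0$'' does not go through.

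The paper avoids statistics entirely and uses $\mC$ in a purely local, rigid way. Because each $F_{n+m}$ is built canonically tile-by-tile (the same model map for every tile of a given prototype), conjugating the similarity $S_n:p_n\rightarrow q_n$ between the two tiles of a copy of $\mC$ by the aggregate maps produces maps $\Phi_{n,m}$ that are genuinely \emph{conformal}, not merely quasiconformal: on each subtile the conjugation collapses to $f_j\circ g_j\circ S\circ g_j^{-1}\circ f_j^{-1}$ with the middle three factors a similarity. Blowing up at a point of differentiability of the limit $F$, letting $m\rightarrow\infty$ and then $n\rightarrow\infty$, one finds that $L\circ S\circ L^{-1}$ is conformal for $L=dF(z_0)$; since the linear part of $S$ is not $\pm I$, $L$ must be a similarity, so $F$ is $1$-quasiconformal and, fixing $0$ and $1$, is the identity. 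If you want to salvage your outline, this local-rigidity mechanism is what should replace the homogenization step; the rest of your argument then goes through essentially as the paper's does.
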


\section{Proof} 
We will work with various homeomorphisms between tilings. These
will be termed {\sl tiling maps}, in that they carry each tile
of the domain tiling bijectively to the associated tile of the range
tiling. Each tile is identified with some prototile, and 
tiling maps are always assumed to respect tile types and 
designated base edges $\langle a, b\rangle$.

We are free to normalize our tilings with similarities, so
assume that $\etl{T}$ is positioned so that the tile $\etl{t}$ of
interest has two of its corners $a,b$ at $0,1$, respectively. Subdivision
of $\etl{T}$ occurs in place, so these corners of the aggregate tiles
$\etl{t}\bs{n} \subset \etl{T}_n$ remain at 0 and 1. On the conformal
side, for each $n$ we apply a similarity to put the corresponding
corners of the aggregate conformal tile $\ctl{t}\bs{n}\sim\etl{t}\bs{n}$
at 0 and 1 as well. Quasiconformal maps $F_n:\etl{T}_n\rightarrow \ctl{T}_n$
are key to our proof.

\begin{Lem}\label{L:qc}
  Let $\etl{T}$ be a substitution tiling and $\ctl{T}$
  the associated conformal tiling. Then there exists a
  $\kappa$-quasiconformal tiling map $f:\etl{T}\rightarrow \ctl{T}$,
  where $\kappa=\kappa(\etl{T})$ depends only on the finite set of
  prototiles for $\etl{T}$. In particular, $\ctl{T}$ is parabolic.
\end{Lem}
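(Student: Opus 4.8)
The plan is to build the quasiconformal map tile-by-tile and then glue the pieces together across edges, controlling the dilatation uniformly.

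More precisely, I would route the map through the affine tiling $\atl{A}$. Since the conformal tiling is by definition $\ctl{T}=\phi(\atl{A})$ for a conformal homeomorphism $\phi:\atl{A}\to\bG$, and a conformal map is $1$-quasiconformal, it suffices to produce a quasiconformal tiling map $g:\etl{T}\to\atl{A}$ whose dilatation is bounded by a constant depending only on the prototiles; then $f=\phi\circ g$ works with the same bound. The advantage of passing through $\atl{A}$, rather than comparing $\etl{T}$ with $\ctl{T}$ directly, is that the tiles of $\atl{A}$ are regular Euclidean $n$-gons determined solely by the finite list of prototile side-counts, so the target shapes are finite and explicit, whereas the actual conformal tiles vary with position and are not known in advance.

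To build $g$ I would first fix, once and for all, one quasiconformal model map per prototile. For each prototile $\ptl{p}_i$ — there are finitely many — choose a quasiconformal homeomorphism $h_i$ from the Euclidean polygon $\ptl{p}_i$ onto the corresponding regular $n$-gon, carrying the designated combinatorial corners to the corners of the regular polygon and parametrizing each boundary edge by normalized arc length between its endpoints; a concrete $h_i$ is obtained by triangulating the two polygons compatibly and mapping affinely, and its dilatation is finite. Any tile $\etl{t}\in\etl{T}$ has the form $\etl{t}=S(\ptl{p}_i)$ for a Euclidean similarity $S$, and I set $g|_{\etl{t}}=h_i\circ S^{-1}$. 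Because $S^{-1}$ is a similarity, hence conformal, the dilatation of $g$ on $\etl{t}$ equals that of $h_i$; taking $\kappa$ to be the maximum over the finite prototile set gives the uniform bound $\kappa=\kappa(\etl{T})$.

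The arc-length normalization is chosen precisely so that two tiles sharing an edge induce the identical map on it, whence the pieces patch into a genuine homeomorphism $g:\etl{T}\to\atl{A}$; then $f=\phi\circ g$ is continuous on all of $\etl{T}=\bC$ and, being a quasiconformal map followed by a conformal one, is $\kappa$-quasiconformal on the interior of each tile. The remaining, and main, step is to upgrade this to a global bound: the exceptional set is the union of the edges and vertices of $\etl{T}$, a locally finite union of straight segments together with isolated points, a set of measure zero that is removable for quasiconformal mappings. I expect this seam removability to be the only delicate point; it can be handled by verifying the ACL property and the almost-everywhere dilatation inequality across the straight seams and then removing the isolated cone-point preimages as point singularities. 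With it, $f:\bC\to\bG$ is globally $\kappa$-quasiconformal.

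Parabolicity then follows formally: $f$ is a $\kappa$-quasiconformal homeomorphism of $\bC$ onto $\bG\in\{\bD,\bC\}$, and since no quasiconformal homeomorphism carries $\bC$ onto $\bD$ (quasiconformal equivalence preserves the conformal type of a simply connected surface), we must have $\bG=\bC$; that is, $\ctl{T}$ is parabolic. Note that the standing configuration assumption on $\etl{T}$ plays no role here: this lemma uses only the finiteness of the prototile set and the fact that $\phi$ is conformal by construction.
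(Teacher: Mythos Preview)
Your proposal is correct and follows essentially the same route as the paper: factor through the affine tiling via $f=\phi\circ g$, build $g$ tile-by-tile from a finite list of piecewise-affine model maps (one per prototile, obtained by compatible straight-edge triangulations of $\ptl{p}_i$ and the regular $n$-gon), check agreement on shared edges, and use that the edge set has measure zero to pass from tilewise to global $\kappa$-quasiconformality; parabolicity then follows since a quasiconformal homeomorphism cannot carry $\bC$ onto $\bD$. Your explicit mention of arc-length boundary normalization and the ACL/removability step makes the gluing argument a bit more transparent than the paper's terse ``being affine, agree on $e$'' and ``area zero'' remarks, but the underlying construction is the same.
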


\begin{proof}
We define $f$ {\sl via} an intermediate tiling map $g:\etl{T}
\rightarrow \atl{A}$, which is constructed tile-by-tile based on
prototile type.

Consider a prototile $\ptl{p}$ having $m$ vertices. Define in {\sl ad
  hoc} fashion a straight-edge triangulation of $\ptl{p}$ by adding a
finite number of vertices and Euclidean line segments as necessary in
the interior of $\ptl{p}$.  Any tile $\atl{a}\in\atl{A}$ having the
type of $\ptl{p}$ is a regular Euclidean $m$-gon. Define a straight-edge
triangulation of $\atl{a}$ in the pattern of the triangulation of
$\ptl{p}$, with corners going to corresponding corners. This can be
easily done, for example, by a Tutte embedding \cite{wtT63}.

The upshot is that we have decomposed $\ptl{p}$ and $\atl{a}$ into
combinatorially equivalent patterns of Euclidean triangles. If
$\etl{t}\in\etl{T}$ has the type of $\ptl{p}$, we may transfer the
triangulation of $\ptl{p}$ by similarity to $\etl{t}$, respecting
corner designations.  Now, define a continuous map
$g_{\etl{t}}:\etl{t}\rightarrow \atl{a}$ by mapping each of these
triangles of $\etl{t}$ affinely onto the corresponding triangle of
$\atl{a}$. Affine maps are quasiconformal and we may take $\kappa$ to
be the maximum of dilatations not only for the finitely many triangles of
$\ptl{p}$, but for the finite number of triangles over all prototile
types. Thus $g_{\etl{t}}$ is $\kappa$-quasiconformal on the interior
of $\etl{t}$.

Applying the construction to every tile of $\etl{T}$ and noting
that when tiles $\etl{t},\etl{t}'$ share an edge $e$, the maps
$g_{\etl{t}},g_{\etl{t}'}$, being affine, agree on $e$, we obtain
a continuous map $g:\etl{T}\rightarrow \atl{A}$.
Since $g$ is $\kappa$-quasiconformal on each tile and the union 
of boundaries of the triangles has area zero, $g$ is
$\kappa$-quasiconformal on all of $\etl{T}$.

Recall that we defined a conformal structure on $\atl{A}$
which is compatible with its p.w. affine structure and a conformal map
$\phi:\atl{A}\rightarrow \ctl{T}$. The map $f:\etl{T}
\rightarrow \ctl{T}$ is defined by $f=\phi\circ g$ and, because
$\phi$ is $1$-quasiconformal, $f$ is $\kappa$-quasiconformal.

Finally, as $\etl{T}$ fills $\bC$, Liouville's theorem for quasiconformal
mapping \cite{LV73} implies that the image $\ctl{T}$ fills $\bC$ as well.
So $\ctl{T}$ is parabolic.
\end{proof}

\subsection{Reduction}
Observe that for any $n\ge 0$ we may define the
$\kappa$-quasiconformal map $F_n:\etl{T}_n\rightarrow \ctl{T}_n$ in
the fashion of Lemma~\ref{L:qc} and that we have a uniform $\kappa$,
since $\kappa(\etl{T})=\kappa(\etl{T}_n)$ for all $n\ge0$.  Since each
$F_n$ fixes the points 0 and 1 by our earlier normalization, it is a
normal family: there exists a subsequence $\{F_{n_j}\}$ which
converges to a $\kappa$-quasiconformal limit function $F$ which also
fixes $0,1$: thus $F_{n_j}\rightarrow F$ uniformly on compacta in
$\bC$ as $n_j\rightarrow\infty$.  We will prove that $F$ is the
identity map. Then, since the aggregate conformal tiles
$\ctl{t}\bs{n}=F_n(\etl{t}\bs{n})$ converge pointwise to
$F(\etl{t}\bs{n})$ and since $\etl{t}\bs{n}=\etl{t}$, the fact that
$F(\etl{t})=\etl{t}$ will imply the desired conclusion,
$\ctl{t}\bs{n}\cis\etl{t}$. In hindsight we may observe that in fact
the full sequence $\{F_n\}$ converges to the identity.

To prove that $F$ is the identity, recall that as a
$\kappa$-quasiconformal map it has a derivative $dF(z)$ for almost
every (with respect to Lebesgue measure) $z\in\bC$. If $dF(z)$ exists
and is a similarity, then the dilatation of $F$ at $z$ must be 1. If
the dilatation is 1 a.e., then $F$ is $1$-quasiconformal --- that is,
$F$ is an entire function. Since $F$ is a homeomorphism fixing $0$ and
$1$, we can conclude that $F$ is the identity.

To complete our proof, therefore, it is enough to show that the linear
mapping $L_z$ associated with $dF(z)$ is a similarity for almost all
$z\in\bC$. We fix attention on a point $z_0$ where $dF(z_0)$ exists.
Translating $\etl{T}$ by $-z_0$ and the mappings $F_n$ by $-F_n(z_0)$,
we may assume without loss of generality that $z_0=0$ and
$F_n(0)=F(0)=0$.  The remainder of the proof then consists in proving
this

\begin{Claim} $L=L_0$ is a similarity.
  \end{Claim}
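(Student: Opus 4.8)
The plan is to show that the $\bR$-linear map $L=L_0=dF(0)$ is conformal by exploiting the Standing Assumption together with the conformal rigidity of like tiles. Writing $Lz=az+b\bar z$, the map $L$ is a similarity precisely when $b=0$ (we already know $L$ is orientation preserving and invertible, so $|a|>|b|$). The idea is that the Standing Assumption produces, at arbitrarily small scales near $0$, two tiles $\etl{t}_1,\etl{t}_2$ of a common prototile type $\ptl{p}$ with $\etl{t}_2=S\etl{t}_1$, where $S$ is a Euclidean congruence whose rotational part $R$ is a fixed rotation through an angle $\theta\neq 0,\pi$. On the conformal side the tiles $\ctl{t}_1,\ctl{t}_2$ corresponding to $\etl{t}_1,\etl{t}_2$ are of the same type, hence are related by a \emph{conformal} homeomorphism; the heart of the matter is that $L$ must conjugate the rotation $R$ to a complex-linear map, and a short computation then forces $b=0$.

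First I would record the conformal input. Since $\ctl{t}_1$ and $\ctl{t}_2$ have the same type, the affine tiling $\atl{A}$ identifies their affine models $\atl{a}_1,\atl{a}_2$ with one and the same regular polygon, so the uniformizing map $\phi$ furnishes a conformal homeomorphism $h:\ctl{t}_1\to\ctl{t}_2$ carrying corners to corresponding corners (obtained by identifying $\atl{a}_1$ and $\atl{a}_2$ with the common regular model and conjugating by $\phi$). Because the quasiconformal maps $F_n$ of Lemma~\ref{L:qc} are built tile-by-tile in an identical manner on tiles of a given type, one has the exact relation $F_n\circ S = h\circ F_n$ on $\etl{t}_1$, valid in the tiling $\ctl{T}_n$ at whatever depth the configuration $\mC$ first appears. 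Being holomorphic, $h$ has a complex-linear derivative, and by the Koebe distortion theorem this derivative is essentially constant across the (tiny) tile; moreover, since $\etl{t}_1,\etl{t}_2$ are congruent and $F$ is approximately the fixed linear map $L$ near $0$, the images $\ctl{t}_1,\ctl{t}_2$ are comparable in size, so the linear part $\tilde R$ of $h$ is a genuine similarity with modulus bounded away from $0$ and $\infty$.

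Next I would pass to the limit. Fixing a sequence of scales $r_k\downarrow 0$, I use the Standing Assumption to select in each disc $D(r_k,0)$ such a pair $\etl{t}_1^{(k)},\etl{t}_2^{(k)}$, with $S_k$ having the fixed rotational part $R$. Feeding the differentiability relation $F(z)=Lz+o(|z|)$ into the identity $F\circ S_k=h_k\circ F$ and dividing by the common scale $r_k$, the error terms vanish as $k\to\infty$ and one is left with the exact intertwining
\[
L\,R=\tilde R\,L,
\]
where $R$ is the fixed nontrivial rotation and $\tilde R$ is complex-linear. Writing $Lz=az+b\bar z$, $\tilde R z=cz$, and $Rz=e^{i\theta}z$, the relation reads $ae^{i\theta}=ca$ and $be^{-i\theta}=cb$; since $a\neq 0$ this gives $c=e^{i\theta}$ and then $b(e^{-i\theta}-e^{i\theta})=0$, whence $b=0$ because $\theta\neq 0,\pi$. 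Thus $L$ is a similarity, proving the Claim.

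The main obstacle is making this limiting argument rigorous, since the special configuration $\mC$ is only guaranteed to appear at unboundedly large subdivision depth. One must therefore interleave three parameters --- the blow-up scale $r_k$, the subsequence index $n_j$ defining the limit $F$, and the depth $n$ at which $\mC$ surfaces inside $D(r_k,0)$ --- replacing $F$ by a suitable $F_{n_j}$ in the exact relation $F_{n_j}\circ S_k=h_k\circ F_{n_j}$ and controlling the resulting discrepancy using the uniform quasiconformality and the uniform convergence $F_{n_j}\to F$. The remaining technical points, namely the uniform Koebe-type control on the maps $h_k$ and the verification that $R$ may be taken to be a rotation rather than a reflection by invoking orientation preservation, are more routine but must be handled to secure the clean intertwining above.
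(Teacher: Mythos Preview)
Your proposal is correct in spirit and follows the same core mechanism as the paper: use the Standing Assumption to produce, at arbitrarily small scales near $0$, a pair of like-type tiles related by a congruence $S$ whose linear part is a nontrivial rotation; observe that the induced map between their conformal images is conformal (because the tile-by-tile construction of $F_n$ is identical on tiles of a given type); and pass to the limit to force $L\circ S\circ L^{-1}$ to be conformal, hence $L$ a similarity. Your final algebraic computation with $Lz=az+b\bar z$ is exactly the content of the paper's concluding sentence.

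Two organizational points distinguish the paper's execution from yours and remove the obstacles you flag. First, the paper avoids your ``interleaving'' difficulty by a clean double limit: at a fixed blowup scale $\rho_n$ it lets the subdivision depth $m\to\infty$, using the convergence $F_{n+m}\to F$ to pass from the exact conformal map $\Phi_{n,m}=\bup{F}{\rho_n}_{n+m}\circ S_n\circ(\bup{F}{\rho_n}_{n+m})^{-1}$ to a conformal $\Phi_n=\bup{F}{\rho_n}\circ S_n\circ(\bup{F}{\rho_n})^{-1}$; only then does it send $n\to\infty$ (so $\rho_n\downarrow 0$) to replace $\bup{F}{\rho_n}$ by $L$. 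This separates the two limits and makes the Carath\'eodory-kernel/normal-families step routine. Second, the paper has no need for your Koebe-distortion argument to linearize $h$: the limit map $\Phi=L\circ S\circ L^{-1}$ is \emph{a priori} affine (as a composition of real-linear and affine maps), so once it is known to be conformal on the interior of the limit polygon $\tilde p=L(p)$ it is automatically a similarity. Your attempt to extract a single complex number $\tilde R$ as ``the linear part of $h$'' is the one genuinely shaky step --- $h$ is conformal but not affine, so its derivative varies across the tile and Koebe alone does not collapse it to a constant; the affineness of the limit is what does that work for free.
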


\subsection{Proof of the Claim}
A bit of notation first. We currently have extracted the subsequence
$\{F_{n_j}\}$ with 
\begin{equation}\tag{A}
  F_{n_j}\rightarrow F\text{ uniformly on compacta as }n_j\rightarrow \infty.
\end{equation}
A finite number of further extractions will be needed, so we abuse notation by
referring in each instance to $\{n_j\}$ as the latest subsequence. We
will also frequently use index ``$n$'' when referring to ``$n_j$'';
context should make our intentions clear. 

To zoom in at 0, we introduce {\sl blowups}. Given
$\psi:\bC\rightarrow \bC$ with $\psi(0)=0$ and given $\rho>0$, define
$\bup{\psi}{\rho}(z)=\psi(\rho z)/\rho, z\in \bC$. Note that
$\bup{\psi}{\rho}(0)=0$ and if the differential $d\psi(0)$ exists,
then $d\psi(0)=d\bup{\psi}{\rho}(0)$, and the limit as $\rho\to0$ is a
linear map:
$$\lim_{\rho\to0}\bup{\psi}{\rho}(z) = [d\bup{\psi}{\rho}(0)](z).$$

The blowups to consider are $\bup{F}{\rho}_n$ and $\bup{F}{\rho}$.
Observe that $d\bup{F}{\rho}(0)=dF(0)$ for all $\rho$, implying that
\begin{equation}\tag{B}
  \bup{F}{\rho}\longrightarrow L\text{ uniformly on compacta of }\cD
  \text{ as }\rho\downarrow 0.
\end{equation}
Moreover, due to (A), for each fixed $\rho>0$ and $n\ge 0$,
\begin{equation}\tag{C}
  \bup{F}{\rho}_{n+m}\rightarrow \bup{F}{\rho}\text{ uniformly
    on compacta as }m\rightarrow \infty.
\end{equation}
(Note an additional abuse of notation here: As $n$ denotes $n_j$, so
$n+m$ here denotes $n_{j+m}$ so that we may appeal to (A).  We will
assume this meaning without further comment when we apply (C).)

\subsubsection{In the Domain}
We may henceforth restrict attention to some disc $\cD$ centered at
0 (e.g., $\cD=\bD$) as the common domain.
The special configuration $\mC$ defined in \S\ref{S:subdetails} is a
union of two congruent copies of some prototile $\ptl{p}$.  A copy of
$\mC$ among the tiles of $\etl{T}_n$ occurs in arbitrarily small
neighborhoods of $0$ for sufficiently large $n$. Therefore, we can
extract a (further) subsequence $\{F_{n_j}\}$ and a corresponding
sequence $\rho_{n_j}$ of blowup parameters, $\rho_{n_j}\downarrow 0$,
so that the scaled tilings ${\etl{T}_{n_j}}/{\rho_{n_j}}$ contain
configurations $\mC_{n_j}\subset \cD$ similar to $\mC$ and with
diameters bounded above and below. By compactness in the Hausdorff
metric we may extract a further subsequence $\{n_j\}$ so that
$\mC_{n_j}\rightarrow \wmC$ where the limit configuration
$\wmC\subset \cD$ is again similar to $\mC$.

For each $n=n_j$, the configuration $\mC_{n}$ is a union $p_n\cup q_n$
of scaled tiles from ${\etl{T}_n}/{\rho_n}$. Here $p_n$ and $q_n$ are
congruent {\it via} a similarity
$S_n:p_n\rightarrow q_n$ that is not a translation. Likewise, in the
limit $\wmC$
is a union of polygons $p$ and $q$ that are congruent to one another
{\sl via} a similarity $S:p\rightarrow q$ that is not a
translation. We note the following convergence properties for
later use:
\begin{equation}\tag{D}
  p_n\rightarrow p,\ \ q_n\rightarrow q,\text{ and }S_n\rightarrow S,
  \text{ as } n=n_j\rightarrow \infty.
\end{equation}

\subsubsection{In the Range} In the range \hide{common range $\wmD$}
we have several additional sets to consider. Let $\tilde p,\tilde q$
be the Euclidean polygons $\tilde{p}=L(p)$ and $\tilde{q}=L(q)$ and
define the affine transformation
$\widetilde{S}:\tilde{p}\rightarrow\tilde{q}$ by $\widetilde{S}=L\circ
S\circ L^{-1}$. Note that, as usual,
$\widetilde{S}:\tilde{p}\rightarrow\tilde{q}$ identifies the
distinguished corners of $\tilde{p}$ and $\tilde{q}$.  The conclusion
will follow once we show that $\widetilde{S}$ is conformal.

Consider the images of aggregates associated with the
tiles $p_n,q_n$. For each $m\ge 0$, the tiles $p_{n}$ and $q_{n}$ in
$\cD$ may be subdivided $m$ times by $\tau$ to give the $m$-aggregate
tiles $p_{n}\bs{m}=\tau^mp_{n}$ and $q_{n}\bs{m}=\tau^mq_{n}$
within $\etl{T}_{n+m}/\rho_n$. Of course, since this subdivision
takes place {\it in situ} in the domain, $p_{n}\bs{m}=p_{n}$ and
$q_{n}\bs{m}=q_{n}$ as point sets in $\cD$.  However, the additional
$m$ subdivisions have implications for the conformal images in the range.
We denote the aggregate images by
\begin{equation*}
  \tilde{p}_{n,m}=\bup{F}{\rho_n}_{n+m}(p_n\bs{m})=\bup{F}{\rho_n}_{n+m}(p_{n}),
\end{equation*}
\begin{equation*}
  \tilde{q}_{n,m}=\bup{F}{\rho_n}_{n+m}(q_n\bs{m})=\bup{F}{\rho_n}_{n+m}(q_{n}).
\end{equation*}
The last bit of tile notation is this:
\begin{equation*}
\hat{p}_n=\bup{F}{\rho_n}(p_n)\ \ \text{ and
}\ \ \hat{q}_n=\bup{F}{\rho_n}(q_n).
  \end{equation*}
Using (C) we have:
\begin{equation}\tag{E}
  \tilde{p}_{n,m}\rightarrow \hat{p}_n,\qquad
  \tilde{q}_{n,m}\rightarrow \hat{q}_n,\ \ \text{ as }\ m\rightarrow \infty.
\end{equation}

In the range we define the homeomophisms
$\Phi_{n,m}:\tilde{p}_{n,m}\rightarrow \tilde{q}_{n,m}$ by
\begin{equation*}
  \Phi_{n,m}=\bup{F}{\rho_n}_{n+m}\circ S_n\circ (\bup{F}{\rho_n}_{n+m})^{-1}.
\end{equation*}
We claim that $\Phi_{n,m}$ is in fact conformal. Recall that
the quasiconformal maps $F_{n+m}$ were defined in a tile-by-tile
fashion. The action on each tile $\etl{t}$ having the type of, say,
$\ptl{p}_j$, was modeled on a composition $f_j\circ g_j$, where $g_j$
is a piecewise affine map carrying some fixed triangulation of $\ptl{p}_j$
to an equivalent triangulation of a regular Euclidean $n$-gon and $f_j$
is a conformal map of that $n$-gon to the conformal image 
tile. But each subtile of $\tilde{p}_{n,m}$ is mapped by $\Phi_{n,m}$ to
a subtile of $\tilde{q}_{n,m}$ sharing the same tile type, so the composition
defining $\Phi_{n,m}$ is modeled on compositions of the form
\begin{equation*}f_j\circ g_j\circ S\circ g_j^{-1} \circ f_j^{-1}.
\end{equation*}
Since $S$ is a similarity and the $g_j$ is canonical for the given tile
type, the center three factors give a similarity. Since $f_j$ is conformal,
this means that the restriction of $\Phi_{n,m}$ to each subtile of
$\tilde{p}_{n,m}$ is conformal. The edges between subtiles form a set of
area zero, so $\Phi_{n,m}:\tilde{p}_{n,m}\rightarrow \tilde{q}_{n,m}$ is
a conformal mapping.

Noting the convergence in (E) and applying normal families and
the Carathe\'odory Kernel Theorem from conformal function theory,
we obtain a limit conformal mapping
$\Phi_n:\hat{p}_n\rightarrow \hat{q}_n$. More explicitly, recalling the
definition of $\Phi_{n,m}$ and the fact that $p_{n,m}=p_n$ and $q_{n,m}=q_n$,
we have \begin{equation*}
\Phi_n=\bup{F}{\rho_n}\circ S_n\circ (\bup{F}{\rho_n})^{-1},\qquad
  \Phi_n:\hat{p}_n \xrightarrow{(\bup{F}{\rho_n})^{-1}} 
  p_n \xrightarrow{S_n} 
  q_n \xrightarrow{\bup{F}{\rho_n}}\hat{q}_n.
\end{equation*}

\subsubsection{Conclusion}
To conclude the proof of the claim, note that as we let $n=n_j$ go to
infinity, we have $\rho_n$ going to zero as well. Using (B) and (D) we
see that $\hat{p}_n\rightarrow \tilde{p}$ and $\hat{p}_n\rightarrow
\tilde{p}$. The sequence $\{\Phi_n\}$ is a normal family, and so up to
taking a subsequence the $\Phi_n$ converge to a limit conformal
mapping $\Phi$,
\begin{equation*}
  \Phi_n\rightarrow \Phi:\tilde{p}\rightarrow \tilde{q}
\end{equation*}
Since by (B) the blowups $\bup{F}{\rho_n}$ converge uniformly on
compacta of $\cD$ to the linear transformation $L$, we see that
$\Phi=L\circ S\circ L^{-1}$ on $\tilde{p}$. In particular,
$L\circ S\circ L^{-1}$ is a similarity. Since $S$ is a similarity
whose linear part is not $\pm I\,$, then the fact that
$L\circ S\circ L^{-1}$ is a similarity implies $L$ must be a
similarity. This completes the proof of the Claim and hence of the
Theorem.

\section{Examples and Questions}\label{S:Examples}
\F{AggShapes} illustrates four substitution tilings from the
traditional tiling literature: the ``chair'', ``domino'', ``sphinx'',
and ``Penrose''. The Euclidean shapes are shown with their subdivision
schemes. Note that the Penrose tile here is a ``dart'' from the familiar
``kite/dart'' version of Penrose tilings; in actuality, there are four 
Robinson prototiles involved, and all four appear in the kite subdivsion.

\vspace{20pt}
\begin{center}
\begin{overpic}[width=.9\textwidth
]{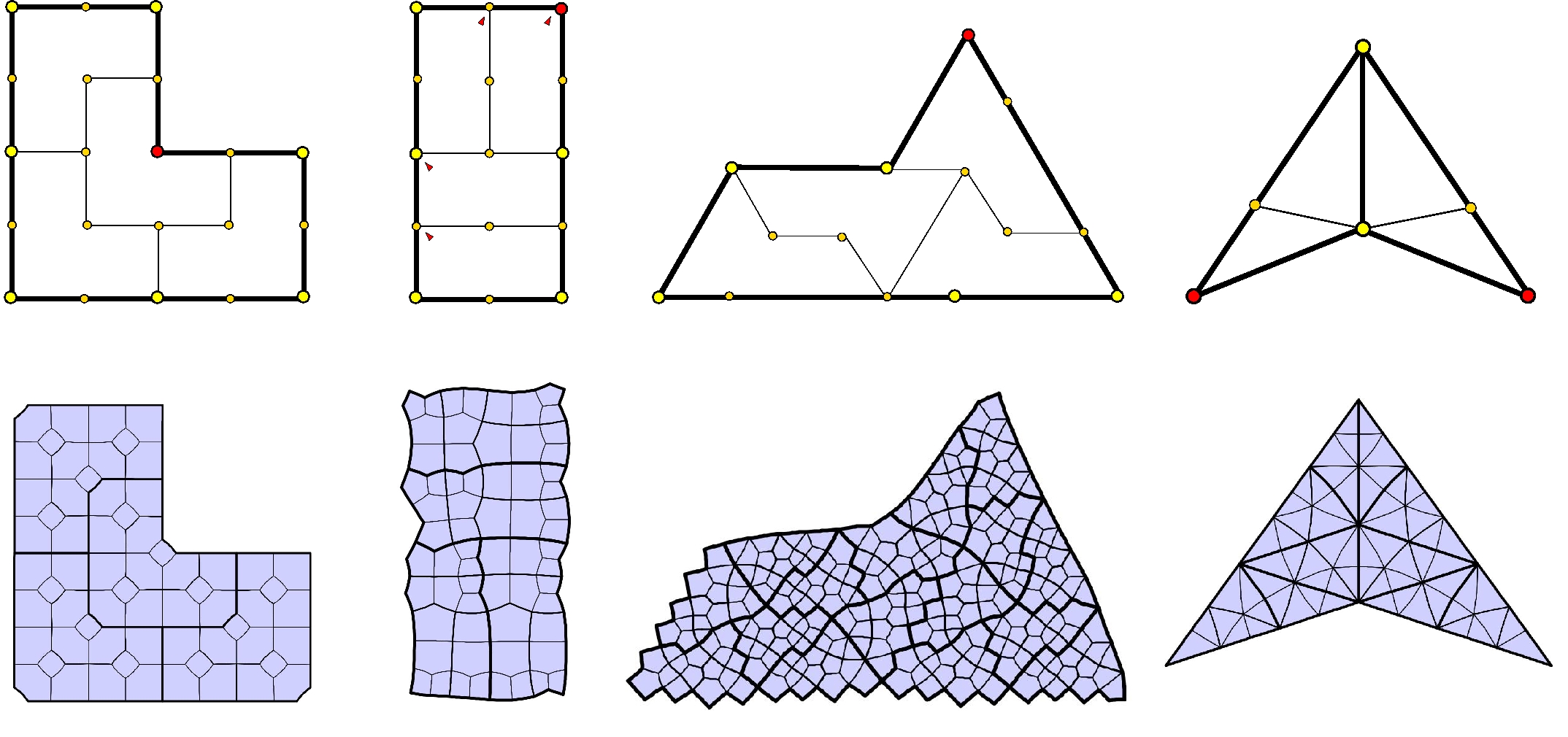}
\end{overpic}
\captionof{figure}{Along the top: chair, domino, sphinx, and Penrose
  Euclidean tiles and their subdivisions. Along the bottom, associated
  aggregate tiles isolated from conformal tilings. The two on the left
  are 3-aggregates, the remaining two,
  4-aggregates.}\label{F:AggShapes}
\end{center}

An intriguing aspect of shape convergence is that the purely
combinatorial data in $\ktl{K}$ and $\tau$ encode precise Euclidean
shapes. Thus the combinatorics of the pinwheel knows about $\sqrt{5}$,
those of the domino know the aspect ratio 1:2, the Penrose kite knows
the golden ratio hidden throughout its tilings. This connection
between combinatorics and geometry has many precedents, of course. The
most farreaching, perhaps, is in Gorthendieck's {\sl dessins
  d'Enfants}, wherein abstract finite graphs (``child drawings'') lead
to algebraic number fields (see \cite{aG85-t,BS04}). Another deep
connection is proposed in Cannon's Conjecture, \cite{jC94,CFP1,CFP2},
concerned with the recognition of Klienian groups from the
combinatorics of subdivision operators.  In conformal tiling itself,
the foundational example was the pentagonal tiling studied in
\cite{BS97}. Jim Cannon, Bill Floyd, and Walter Parry, along with the
first author, Rick Kenyon, proved in \cite{CFKP97} that it arises from
iteration of the inverse of a rational function with integer
coefficients, leading, for example, to the wonderful scaling factor of
the tiling, $\lambda=(324)^{-1/5}$.

In this broader view, the combinatorics of substitution tilings are
rather rarified --- examples are difficult to come by and the handful
available are prized. In contrast, the combinatorics of conformal
tilings can be nearly arbitrary, even if one specifies finite numbers
of tile types and finite local complexity. Several examples in
\cite{BS17} not associated with traditional substitution tilings
illustrate this ubiquity. Typically the finite number of combinatorial
tile types have, in their conformal tilings, infinitely many Euclidean
shapes. Although general limiting behaviours of aggregate tiles have
not yet been studied closely in the conformal setting, \cite{BS17}
does identify an important class of ``conformal'' subdivision rules
$\tau$, under which tile shapes, though infinite in variety, still
subdivide {\sl in situ}.

This landscape of combinatorial/geometric interactions raises some natural 
questions which we pose to the interested reader. 

\begin{Question} Are there criteria to determine whether
  a given finite collection of combinatorial tile types with a
  combinatorial subdivision operator $\tau$ is associated with a
  substitution tiling?
\end{Question}

\begin{Question} Can one discover new substitution tilings in this way?
\end{Question} 

\begin{Question} What is the limiting fate of aggregate conformal tiles for 
general combinatorial subdivision tilings?
\end{Question}

A last comment is about the experiments behind this paper: Circle
packing is as yet the only method for approximating conformal tilings
in practice.  The examples illustrated here were created in the
software package \CP.  This software is available on the third
author's web site. In addition, the \CP\ scripts for creating and
manipulating the specific examples in the paper are available from the
third author on request.

\section{Acknowledgements}
Research of R.K. is supported by NSF grant  DMS-1612668 and the Simons Foundation award 327929; research of K.S. is supported by Simons Foundation award 208523.
We thank Rich Schwartz for valuable conversations.


\bibliographystyle{amsplain}
\bibliography{OurBib} 
\end{document}